\def \D {\mathcal{D}}
\def \pd {\mathrm{PD}_{\alpha,\theta}}
\def \pdomega {\mathrm{PD}_{\alpha,\theta|\omega}}
\def \simplex {\overline\nabla_{\infty} }
\def \gen {L_{\alpha,\theta}}	
\def \dualgen {A_{\theta}}
\def \E {\mathbb{E}}
\def \P {\mathbb{P}}
\def \Z {\mathbb{Z}}
\def \P {\mathbb{P}}
\def \w {\ell}
\newtheorem{theorem}{Theorem}[section]
\newtheorem{proposition}[theorem]{Proposition} 
\newtheorem{corollary}[theorem]{Corollary}
\newtheorem{lemma}[theorem]{Lemma} 
\newtheorem{definition1}[theorem]{Definition} 
\newtheorem{remark1}[theorem]{Remark} 
\long\def\symbolfootnote[#1]#2{\begingroup\def\thefootnote{\hspace*{-1mm}\fnsymbol{footnote}}\footnote[#1]{#2}\endgroup}
\title{\bf 
Dual process in the two-parameter Poisson--Dirichlet diffusion
}
\author{\normalsize
\textsc{Robert C. Griffiths}, {\em Monash University}\\
\normalsize\textsc{Matteo Ruggiero}, {\em University of Torino and Collegio Carlo Alberto}\\
\normalsize\textsc{Dario Span\`o}, {\em University of Warwick}\\
\normalsize\textsc{Youzhou Zhou}, {\em Xi'an Jiaotong Liverpool University}}
\date{ \today}
\begin{document}

\maketitle
\thispagestyle{empty}

\begin{center}
\begin{minipage}{.75\textwidth}
\footnotesize\noindent
The two-parameter Poisson--Dirichlet diffusion takes values in the infinite ordered simplex and extends the celebrated infinitely-many-neutral-alleles model, having a two-parameter Poisson--Dirichlet stationary distribution.
Here we identify a dual process for this diffusion and obtain its transition probabilities. The dual is shown to be given by Kingman's coalescent with mutation, conditional on a given configuration of leaves. Interestingly, the dual depends on the additional parameter of the stationary distribution only through the test functions and not through the transition rates. After discussing the sampling probabilities of a two-parameter Poisson--Dirichlet partition drawn conditionally on another partition, we use these notions together with the dual process to derive the transition density of the diffusion. Our derivation provides a new probabilistic proof of this result, leveraging on an extension of Pitman's P\'olya urn scheme, whereby the urn is split after a finite number of steps and two urns are run independently onwards. The proof strategy exemplifies the power of duality and could be exported to other models where a dual is available.
\\[-2mm]

\textbf{Keywords}:
{P\'olya urn};
{Kingman's coalescent};
{lines of descent};
{Pitman sampling formula};
{transition density}.\\[-2mm]

\end{minipage}
\end{center}


\newpage
\section{Introduction}
The  two-parameter Poisson--Dirichlet diffusion (later simply two-parameter diffusion), was introduced by \cite{P2009} and extends the celebrated infinitely-many-neutral-alleles model of \cite{EK1981} (here obtained when $\alpha=0$) to the case of two parameters $\alpha \in [0,1)$ and $\theta >-\alpha$. This diffusion takes values in the closure of the infinite-dimensional ordered simplex, taken with respect to the product topology of $[0,1]^\infty$, namely
\begin{equation}\nonumber
{\overline\nabla}_\infty := \bigg\{x\in [0,1]^{\infty}:\ x_1\geq x_2\geq \cdots \geq 0,\ \sum_{i\ge1} x_i \leq 1\bigg\},
\end{equation} 
and has infinitesimal operator
\begin{equation}\label{gen:0}
\gen  = \frac{1}{2}\sum_{i,j=1}^\infty x_i(\delta_{ij}-x_j)
\frac{\partial^2}{\partial x_i\partial x_j}-\frac{1}{2}\sum_{i=1}^\infty (\theta x_i+\alpha)\frac{\partial}{\partial x_i}.
\end{equation}
The domain of $\gen $ is taken to be the algebra generated by constants and functions
 $\varphi_{k}(x)=\sum_{i=1}^{\infty}x_i^k$ for $k\ge 2$. This is a dense subalgebra of $C(\simplex)$ (cf., e.g., \cite{EK1981}, end of page 435), and \cite{P2009} showed that the closure of $\gen$ in $C(\simplex)$ generates a Feller semigroup.

The two-parameter diffusion is stationary and reversible with respect to the two-parameter Poisson--Dirichlet distribution $\pd$, with $(\alpha,\theta)$ as above.
This distribution was introduced by \cite{PPY92,P95,PY97} as an extension of Kingman's celebrated Poisson--Dirichlet distribution \cite{K75}. It can be viewed as the law of the limit ranked frequencies of types appearing in a sequence $Y_1,Y_2,\ldots$ of observations sampled from Pitman's \cite{P1996} extension of the Blackwell--MacQueen urn P\'olya urn scheme \cite{BM1973}, hereby recalled: for a non-atomic probability measure $P_0$ on an uncountable state space $S$, it is assumed that $\P(Y_1\in A)=P_0(A)$ and for $n\geq 1$, after the first $n$ observations $y^{(n)}:=(y_{1},\ldots,y_{n})$,
\begin{equation}\label{PU scheme}
\P(Y_{n+1}\in A|y^{(n)})=\frac{\theta+\alpha k }{\theta+n}P_{0}(A)
+\frac{1}{\theta+n}\sum_{j=1}^{k}(n_{j}-\alpha)\delta_{y_{j}^{*}}(A),
\end{equation} 
for every Borel set $A$ of $ S$, where $k$ is the number of distinct values $y_{j}^{*}$ in $y^{(n)}$, with $j=1,\ldots,k$, with respective observed multiplicities $n_{j}$. The scheme reduces to the original Blackwell--MacQueen generalized P\'olya urn scheme when $\alpha=0$.
The distribution $\pd$ and its corresponding urn scheme \eqref{PU scheme} have found numerous applications in a variety of fields. We refer the reader to the following monographs, and references therein: 
\cite{B06} for fragmentation and coalescent theory, \cite{P2006} for excursion theory and combinatorics, \cite{LP09} for Bayesian inference, \cite{TJ09} for machine learning, \cite{F2010} for stochastic population dynamics. 
Probably owing to the popularity of the two-parameter Poisson--Dirichlet distribution, since the contribution of \cite{P2009} there has been a continuous and renewed interest in understanding the finer properties of the two-parameter diffusion, together with other closely related dynamical structures. See  \cite{RW09,FS10,E14,CDRS17,F19,FS19,FRSW2020,Fea21b,RW22,Fea23a,Fea23b}. 
Here it is important to recall that the infinitely-many-neutral-alleles model, corresponding to \eqref{gen:0} with $\alpha=0$, 
can be seen as the \emph{unlabeled} version of a Fleming--Viot diffusion, where the \emph{labels} that represent the information on individual values in the underlying evolving population are lost, and only the relative frequencies of types (arranged in decreasing order) are retained. Cf.~\cite{EK93}, Theorem 9.2.1, or \cite{F2010}, Theorem 5.6. A similar correspondence for the two-parameter diffusion is still object of investigation today, and while \cite{FRSW2020,Fea21b,Fea23a,Fea23b} have recently provided advancements, many aspects are still to be fully understood. 

This paper investigates the dual process of the two-parameter diffusion, and aims to shed some further light on the role of the parameter $\alpha$. Informally, two Markov processes $X_{t}$ and $D_{t}$ taking values in two corresponding state spaces $E$ and $F$, are said to be \emph{dual} to each other with respect to a function $h(x,d)$ (that belongs to the domain of both generators) if the identity 
\begin{equation}\label{duality identity}
\E_{x}[h(X_{t},d)]=\E_{d}[h(x,D_{t})]
\end{equation} 
holds for all $x\in E,d\in F$ and $t\ge0$. See \cite{JK2014} for a review. 
In the above identity, the expectation on the left hand side is taken with respect to the law of $X_{t}$, conditional on $X_{0}=x$, while that on the right hand side is taken with respect to the law of $D_{t}$, conditional on $D_{0}=d$. The function $h(x,d)$ satisfying the  identity is referred to as the \emph{duality function} between $X$ and $D$. Duality is an important tool in the theory of stochastic processes \cite{EK86,Dea23}, which has found widespread applications, among other areas, 
in mathematical population genetics
\cite{M99,BEG00,AS05,HW07,EG09,Bea09,Eea10,Fea11,Cea15,Fea21a,CH23}, 
in statistical physics and interacting particle systems \cite{Gea07,Gea09a,Gea09b,O10,HM11,Fea18}, 
and in statistical inference \cite{PR14,PRS16,ALR21,KKK21,ALR23,KKK24}.  
Knowledge of the dual process and of its properties therefore has key implications, which range from the study of the well-posedness of the martingale problem associated to $X_{t}$, to the possibility of conducting sequential inference on $X_{t}$ given a finite computational budget, to the ability of deriving the transition function of $X_{t}$ through limit arguments. The latter will also be the main application of our results.

Here we relate the dual process of \eqref{gen:0} to Kingman's coalescent \cite{K82}. The literature on Kingman's coalescent and its extensions is vast, and here we simply refer the reader to the reviews \cite{B09,GS2010} and references therein. As shown later (cf.~Theorem \ref{main prop}) the dual process to the two-parameter diffusion turns out to coincide with the process that counts, in Kingman's coalescent, the number of non-mutant lineages ancestral to a sample from the current generation, conditionally given the sample's unlabelled allelic partition. In particular, the dual transition rates still depend only on the parameter $\theta$, like for the $\alpha=0$ model, while the dependence on the additional parameter $\alpha$ is through the test (or duality) functions.
This fact may somewhat be surprising. Indeed, it is well-known that a Kingman's coalescent tree with $n$ leaves can be realised by sampling lineages sequentially, starting from the root, according to the Blackwell--MacQueen P\'olya urn scheme, i.e., \eqref{PU scheme} with $\alpha=0$. Given the connection between \eqref{PU scheme} and the stationary distribution of the two-parameter diffusion, it would thus be natural to expect that $\alpha$ plays a role in the law of the dual process. In fact, this parameter has been shown to have a key role in the construction of the diffusion \cite{RW09,CDRS17} or of its labelled counterpart \cite{FRSW2020}. One would then expect $\alpha$ to influence the deletion of groups both through mutation and coalescence. Our results instead  indicate that these two effects of $\alpha$ on the block-counting rates balance each other out perfectly, giving a zero net effect.
A possible explanation is related to the fact that the dual process describes the {\em conditional} genealogy of a sample given its allelic partition. As an effect of exchangeability, conditionally on the frequencies, the parameters only contribute to the law of the coalescent's {\em block-counting process}, keeping track of the total number of surviving lineages, regardless of their types. 
 

The paper is organized as follows. In Section \ref{sec: preliminiaries} we collect some notation together with some necessary preliminary notions. In Section \ref{sec: duality} we fully characterize the dual process of the two-parameter diffusion. In Section \ref{sec: conditional partitions} we discuss the sampling distributions related to drawing a partition from the two-parameter Poisson--Dirichlet model conditional on another, already observed, partition. These distributions, together with the dual process, are then used in Section \ref{sec: transition} where we derive the transition density of the two-parameter diffusion using the identified dual. This transition density was found in \cite{FSWX2011} through analytical tools.
Our derivation uses instead a probabilistic approach, which exploits a \lq\lq split\rq\rq version of Pitman's two-parameter urn scheme \eqref{PU scheme},  namely is a system of two such urns that grow conditionally independently given the same vector of initial observations. The split urns give, in the limit, a bivariate distribution with identical $\pd$ marginal laws, whereby the size of the common initial sample measures the strength of the dependence between the two coordinates. It turns out that the transition density of the two-parameter diffusion can be interpreted as the conditional distribution of one urn composition given the other in such a construction, where the size of the initial sample is {\em random} and its distribution is given by the block-counting process of the dual. 
Our probabilistic derivation of the transition density therefore exemplifies the power of duality and could be reproduced, at least in principle, to identify the transition density for models when this is unknown, but a dual is nonetheless available, e.g., for coupled Wright--Fisher diffusions \cite{Fea21a}.


\section{Preliminaries}\label{sec: preliminiaries}


A vector  $\eta=(\eta_1,\cdots,\eta_d) \in \mathbb{N}^d, d\in\mathbb{Z}_+$, is called an \emph{integer partition} of $n\in\mathbb{Z}_+$ if $\eta_1\geq\cdots\geq\eta_d>0$ and $|\eta|:=\sum_{i=1}^{d}\eta_i=n$. When useful to make explicit, the \emph{length} of $\eta$ will also be denoted as $l(\eta)=d$. Define $\Gamma:=\cup_{n\geq0}\Gamma_n$, where $\Gamma_0=\{\emptyset\}$ and $\Gamma_n$ is the set of all integer partitions of $n$. {It will be convenient to view a partition $\eta$ as an infinite vector obtained by appending an infinite sequence of zero coordinates to the $l(\eta)$-th part of $\eta$.} For $\omega, \eta\in \Gamma$, we say that $\omega\subset \eta$ if and only if $\omega_i\leq \eta_i$ for all $i\geq1$, so that $(\Gamma,\subset)$ becomes a partially ordered set. Throughout the paper, we will have $n=|\eta|$ and $ \w =|\omega|$.
Given $\eta\in \Gamma$ and $x\in \nabla_{\infty}:=\{x\in \simplex: \sum_{i\ge1}x_{i}=1\}$, define now 
\begin{equation}\label{simple P}
{P}_\eta(x) := \sum_{1\le i_1\ne \cdots \ne i_d< \infty}x_{i_1}^{\eta_1}\cdots x_{i_d}^{\eta_d},
\end{equation} 
so that in particular when $\eta$ is given by a singleton we have $P_{(1)}=1$, and for definiteness we also set
${P}_\emptyset(x):=1$ (note however that $\eta=\emptyset$ will not be needed; cf.~Section \ref{sec: duality}). Let also
\begin{equation}\label{vec P}
\vec{P}_\eta(x)
:=\binom{n}{\eta}\frac{1}{a_1(\eta)!\cdots a_n(\eta)!}{P}_\eta(x), \quad \quad 
\binom{n}{\eta}:=\frac{n!}{\eta_1!\cdots \eta_d!},
\end{equation} 
where $a_k(\eta)$ is the number of groups in $\eta$ with size $k$, so that $\sum_{k=1}^na_k(\eta) = d$ and $\sum_{k=1}^nka_k(\eta) = n$. 
Note that for every $x$, $\vec{P}_\eta(x)$ is a symmetric function in $\eta$, and for every $\eta$, it  is a symmetric function in $x$.
Here and throughout we adopt the same convention used in \cite{EK1981,P2009}, whereby  both functions $P_{\eta}(x)$ and $\vec{P}_\eta(x)$ can be extended continuously from $\nabla_{\infty}$ to $\simplex$, since $\nabla_{\infty}$ is dense in $\simplex$. So for example $P_{(1)}$ equals $1$ and not $\sum_{i\ge1}x_{i}$, which is not continuous on $\simplex$. With a slight abuse of notation, we will use the same symbols to denote their continuous extensions onto $\simplex$.

Here $x\in \nabla_{\infty}$ may be regarded as the vector of relative frequencies in an infinite population with potentially infinitely-many types, or of color frequencies for balls in an urn. If we sample with replacement $n$ items from the population or $n$ balls from the urn, given $x$, then $\vec{P}_\eta(x)$ is the probability that the sample features 
$\eta_{1}$ items/balls of one type/color (regardless of which), $\eta_{2}$ items/balls of a different type/color, etc.

These function assume a prominent role in the framework of partition structures. A partition structure is a family of distributions $\{M_n(\eta),\eta\in \Gamma_{n}\}$ for $n\ge1$, that satisfies the consistency condition whereby, for $\omega\in \Gamma_{n-1}$,
\begin{equation}
\label{consistency}
M_{n-1}(\omega)=\sum_{\eta\in\Gamma_n:\eta\supset\omega}\frac{\binom{n-1}{\omega}\chi(\omega,\eta)}{\binom{n}{\eta}}M_{n}(\eta), 
\end{equation}
where $\chi(\omega,\eta)$ equals $a_{\eta_i}(\eta)$ if $\eta_i-\omega_i=1$ and $\eta_j-\omega_j=0$ for $j\ne i$, 
and zero elsewhere. Here $M_{0}(\emptyset)=M_{1}((1))=1$.
A similar consistency property holds for functions $\eta\mapsto \vec{P}_{\eta}(x)$, given $x\in\nabla_\infty$, as well, which are also partition structures.
By Kolmogorov's consistency condition, we can establish a probability measure $\mathcal{M}(\cdot)$ on integer partitions $\Gamma$. We say that a $\Gamma$-valued stochastic process $\{D_n,n\geq 1\}$ has distribution $\mathcal{M}$, if $D_n\in \Gamma_n$ and has marginal distribution $M_n(\eta)$.

By Kingman's Representation Theorem \cite{K78}, any exchangeable partition structure $\{M_n(\eta),n\geq1\}$ admits the representation 
\begin{equation}\label{Kingman representation}
M_n(\eta):=\E_{\mu}[\vec{P}_\eta(X)]=\int_{\simplex}\vec{P}_\eta(x) \mu(dx),
\end{equation} 
where $\mu$ is a probability measure on $\simplex$, called \emph{representing measure}.
Furthermore, if $\{D_n, n\geq 1\}$ is a family of random partitions with distributions $\{M_n,n\geq1\}$, then $Z:=\lim_{n\to\infty}D_n/n$ exists almost surely and has distribution $\mu$.
For ease of later reference, we state without proof the following Lemma, which is an immediate consequence of the above.

\begin{lemma}\label{weak_convergence}
Let $\{M_n,n\geq1\}$ be a partition structure with representing measure $\mu$. Then 
$$
\mu_n(dx)=\sum_{\eta\in\Gamma_n}M_n(\eta)\delta_{\eta/n}(dx)
$$
converges weakly to $\mu$ as $n\to\infty$.
\end{lemma}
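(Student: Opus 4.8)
The plan is to read $\mu_n$ probabilistically and then appeal to the second half of Kingman's Representation Theorem. By construction $\mu_n$ is the law of the random frequency vector $D_n/n$, where $D_n\in\Gamma_n$ has distribution $M_n$, and $\mu$ is the representing measure of the structure. Realising the whole family $\{D_n\}$ consistently as the restrictions to $\{1,\dots,n\}$ of a single exchangeable random partition of $\mathbb{N}$, the cited theorem gives $Z=\lim_{n\to\infty}D_n/n$ almost surely in $\overline{\nabla}_\infty$ with $Z\sim\mu$. Since $\overline{\nabla}_\infty$ is compact and metrizable in the product topology, almost sure convergence forces convergence in distribution; as $D_n/n\sim\mu_n$ and $Z\sim\mu$, this is exactly $\mu_n\Rightarrow\mu$. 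The one subtlety to flag is that the operative convergence is coordinatewise convergence of the ranked frequencies, i.e. convergence in the product topology, and this is precisely what permits mass to be lost to ``dust'': each $\eta/n$ lies in $\nabla_\infty$, yet the limit may satisfy $\sum_i Z_i<1$ because $x\mapsto\sum_i x_i$ is only lower semicontinuous on $\overline{\nabla}_\infty$. Working in the product topology throughout disposes of this automatically.

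Should one prefer a self-contained argument that does not invoke the almost sure limit, I would instead verify weak convergence against test functions. The continuous extensions of $\{\vec{P}_\eta:\eta\in\Gamma\}$ have linear span equal to the algebra $\mathcal{C}$, which the introduction recalls is dense in $C(\overline{\nabla}_\infty)$; because $\overline{\nabla}_\infty$ is compact it then suffices to show, for each fixed $\eta\in\Gamma_w$,
\begin{equation}\nonumber
\int_{\overline{\nabla}_\infty}\vec{P}_\eta\,d\mu_n=\sum_{\lambda\in\Gamma_n}M_n(\lambda)\,\vec{P}_\eta(\lambda/n)\;\longrightarrow\;M_w(\eta)=\int_{\overline{\nabla}_\infty}\vec{P}_\eta\,d\mu \quad (n\to\infty),
\end{equation}
the final equality being Kingman's representation of $M_w$. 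The quantity $\vec{P}_\eta(\lambda/n)$ is the probability of obtaining configuration $\eta$ when $w$ balls are sampled \emph{with} replacement from the urn of $n$ balls coded by $\lambda$, whereas the hypergeometric weight $\mathcal{H}(\eta\mid\lambda)$ records the same event under sampling \emph{without} replacement.

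The key steps are then: (i) the exact identity $\sum_{\lambda\in\Gamma_n}M_n(\lambda)\,\mathcal{H}(\eta\mid\lambda)=M_w(\eta)$, valid for every $n\ge w$, which follows either from consistency of the partition structure or, directly, from $\sum_{\lambda\in\Gamma_n}\vec{P}_\lambda(x)\,\mathcal{H}(\eta\mid\lambda)=\vec{P}_\eta(x)$ integrated against $\mu$ (a uniform $w$-subsample of $n$ i.i.d.\ draws is again $w$ i.i.d.\ draws); and (ii) the comparison $\big|\vec{P}_\eta(\lambda/n)-\mathcal{H}(\eta\mid\lambda)\big|\le\binom{w}{2}/n$, uniform in $\lambda$, obtained by coupling the two samples to coincide unless two of the $w$ draws hit the same ball. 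Summing (ii) against $M_n$ gives an error of order $w^2/n\to0$, and combined with (i) this yields the displayed limit. I expect the main obstacle to be precisely the uniform control in (ii): one must bound the with/without-replacement discrepancy independently of the composition $\lambda$ and confirm that the without-replacement marginal collapses exactly to $M_w$. In the first approach, by contrast, the entire difficulty is compressed into correctly invoking Kingman's almost sure limit and recognising that the operative topology is the product topology.
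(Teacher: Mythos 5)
Your first argument is exactly the paper's proof: realise $D_n\sim M_n$ consistently, invoke the almost sure limit $Z=\lim_n D_n/n\sim\mu$ from Kingman's Representation Theorem, and pass to convergence in distribution (the paper phrases this via bounded convergence of $\mathbb{E}f(D_n/n)$), and your extra care about the consistent coupling and the product topology is sound. The alternative test-function argument you sketch is a valid, more self-contained route, but it is not needed and is not what the paper does.
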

%

When partitions are generated by two-parameter Poisson--Dirichlet distributions, \eqref{Kingman representation} holds with $\mu$ given by $\pd$, and $M_n(\eta)$ is the probability of observing a partition $\eta$ of $n$ from a sample of size $n$ drawn from the two-parameter Chinese restaurant process. This is the law of the unlabelled empirical frequencies induced by the generalized P\'olya urn scheme \eqref{PU scheme}, and yields the Ewens--Pitman sampling formula \cite{P95}
\begin{equation}\label{PSF}
M_n(\eta)
=\E_{\alpha,\theta}[\vec{P}_\eta(X)]
=\binom{n}{\eta}\frac{1}{a_1(\eta)!\cdots a_n(\eta)!}\frac{\prod_{l=0}^{d-1}(\theta+ l\alpha)}{\theta_{(n)}}\prod_{i=1}^d(1-\alpha)_{(\eta_i-1)}.
\end{equation} 
where $a_{(n)}=a(a+1)\cdots(a+n-1)$, $a_{(0)}=1$. The expectation of $P_{\eta}$ with respect to $\pd$ yields instead the so-called \emph{exchangeable partition probability function} \cite{P95}
\begin{equation}\nonumber
\E_{\alpha,\theta}[{P}_{\eta}(X)]
=\frac{\prod_{l=0}^{d-1}(\theta+ l\alpha)}{\theta_{(n)}}\prod_{i=1}^d(1-\alpha)_{(\eta_i-1)}.
\end{equation} 
Cf.~also (36) and (42) in \cite{P1996}. Later we will use the shorter notation $\E_{\alpha,\theta}[{P}_{\eta}]$ and $\E_{\mu}[\vec{P}_\eta]$.

\section{The dual process}\label{sec: duality}

In this section we fully characterize a dual process for the two-parameter diffusion induced by symmetric monomial functions. This is shown to be a pure-death continuous-time Markov chain on integer partitions which coincides with the process counting the number of surviving lineages of each type in a Kingman's coalescent tree, going backward in time, conditional on a starting unlabelled partition of \lq\lq leaves\rq\rq. The dual process does not depend on $\alpha$ through its transition rates, but only through the duality functions. In particular, the block-counting process is the same as the known block-counting process dual to any reversible, neutral Wright--Fisher-type diffusion. The dual we describe  in this Section will be used in Section \ref{sec: transition} for deriving the transition density of the two-parameter diffusion, after providing some additional results in Section \ref{sec: conditional partitions} on the sampling distribution $M_n(\eta)$ conditional on a partially observed partition.

The duality identity \eqref{duality identity}, under reasonably general conditions, can be verified through a similar identity involving the corresponding infinitesimal generators acting on appropriate test functions, which is the object of the next two Lemmas, i.e., $Lh(\cdot,d)(x)=Ah(x,\cdot)(d)$ where $L$ is the generator of $X_{t}$ and $A$ that of the dual. See \cite{JK2014}, Proposition 1.2. To this end, note first that the family of functions $\{{P}_{\eta}(x),\eta\in\Gamma,{\min_{i}\eta_{i}>1}\}$, with $P_{\eta}$ as in \eqref{simple P}, are a linear basis for the algebra taken as the domain of $\gen $. Cf.~\cite{P2009}, Section 2. Recall that these functions are intended as their continuous extension from $\nabla_{\infty}$ to $\simplex$, i.e., they are evaluated on $\nabla_{\infty}$ and extended to $\simplex$ by continuity. Hence ${P}_{(1)}(x)=1$, which for example implies that $1={P}_{(1)}(x){P}_{(1)}(x)={P}_{(2)}(x)+{P}_{(1,1)}(x)$, from which one finds that ${P}_{(1,1)}(x):=1-{P}_{(2)}(x)$. More generally, we would like to compute $\gen$ on all $P_{\eta}$, which the following lemma allows. Denote by $e_{i}$ the canonical vector in the $i$-th direction, and let $\eta - e_i$, when $\eta_i=1$, indicate $(\eta_1,\ldots, \eta_{i-1},\eta_{i+1},\ldots,\eta_d)$. In the next two Lemmas, we assume for simplicity of exposition that $\eta$ is unranked, which has the only purpose of avoiding to account for multiplicity constants. The actual value of $P_{\eta}$ is unaffected since it is symmetric in the $\eta$ components.

\begin{lemma}
For $d=l(\eta)>1$, let $\eta_i=1$. Then
\begin{equation}\label{P_recursion:00a}
{P}_{\eta}(x) = {P}_{\eta-e_i}(x) - \sum_{1\leq j\leq d,j\neq i}{P}_{\eta-e_i+e_j}(x), \quad \quad x\in \nabla_{\infty}.
\end{equation}
\end{lemma}
\begin{proof}
By pre-multiplying ${P}_{\eta}(x)$ by ${P}_{(1)}(x)=1$, we find
\begin{equation}\nonumber
\begin{aligned}
{P}_{\eta}(x)
=&\,
\sum_{k=1}^{\infty}x_{k}\sum_{i_1\ne \cdots \ne i_d}x_{i_1}^{\eta_1}\cdots x_{i_d}^{\eta_d}\\
=&\,\sum_{i_1\ne \cdots \ne i_d\ne k}x_{i_1}^{\eta_1}\cdots x_{i_d}^{\eta_d}x_{k}
+\sum_{j=1}^{d}\sum_{i_1\ne \cdots \ne i_d}x_{i_1}^{\eta_1}\cdots x_{i_j}^{\eta_j+1}\cdots x_{i_d}^{\eta_d}\\
=&\,{P}_{(\eta,1)}(x)+\sum_{j=1}^{d}{P}_{\eta+e_{j}}(x).
\end{aligned}
\end{equation} 
The result is now obtained by letting $\eta$ in the claim be $(\eta,1)$ and $i=d+1$.
\end{proof}

Note that ${P}_{\eta-e_i}$ and ${P}_{\eta-e_i+e_j}$  in the above Lemma are well defined since ${P}_{\eta}$ is symmetric in $\eta$.
The next Lemma, key in identifying the dual process, makes use of \eqref{P_recursion:00a} to show how the operator $\gen $ acts on all functions ${P}_\eta$.

\begin{lemma}\label{generator_lemma}
$\gen $ is well defined on all ${P}_\eta$ in the system of equations (\ref{P_recursion:00a}), recursive on $a_1(\eta)$. In particular $\gen 1=0$, and if $n=|\eta|$  and $d=l(\eta)$, 
\begin{equation}
\gen {P}_\eta = 
\frac{1}{2}\sum_{i:\eta_i>1}\eta_i(\eta_i-1-\alpha){P}_{\eta-e_i}+\frac{1}{2}(\theta + (d-1)\alpha)\sum_{i:\eta_i=1}{P}_{\eta-e_i}
 -\frac{1}{2}n(n+\theta-1){P}_\eta.
\label{recursion:0}
\end{equation}
\end{lemma}
\begin{proof}
This was first proved by \cite{P2009}, Proposition 3.1. In the appendix  we provide an independent proof based on \eqref{P_recursion:00a}. Note in particular that $\gen P_{\eta}=0$ when $\eta=(1)$, which uses the extension by continuity from $\nabla_{\infty}$ to $\simplex$ recalled in the introduction.
\end{proof}

Define now 
\begin{equation}\label{duality function}
g_{\eta}(x):=
\frac{{P}_{\eta}(x)}{\E_{\alpha,\theta}[{P}_{\eta}]}
=\frac{\vec{P}_\eta(x)}{\E_{\alpha,\theta}[\vec{P}_{\eta}]},
\end{equation} 
which are going to be our duality functions. Let also
\begin{equation}\label{lambda_n}
\lambda_{n}:=\frac{1}{2}n(\theta+n-1).
\end{equation} 
The following theorem identifies the dual process of the two-parameter diffusion.

\begin{theorem}\label{main prop}
Let $X$ be the diffusion process corresponding to $\gen $, and let $\{\D_{t} \}_{t\geq 0}$ be a continuous-time death process on
$\Gamma$, with transition rates 
\begin{equation}\label{rates in theorem}
\lambda_{|\eta|}p^{\downarrow}(\eta,\omega), \quad \quad 
p^{\downarrow}(\eta,\omega):=\frac{\eta_{i}{a}_{\eta_i}(\eta)}{|\eta|},\quad \quad 
|\eta|>1,
\end{equation} 
when $\omega$ is the descending arrangement of $\eta-e_i$, and zero elsewhere. Then, for every $\eta\in \Gamma$ and $ x\in\overline\nabla_\infty$, we have
\begin{equation}\label{dual:200}
\E\big [g_\eta(X(t))|X_{0}=x]=\E\big [g_{\D_{t} }(x)|\D_{0}=\eta].
\end{equation}
\end{theorem}
\begin{proof}
The proof essentially follows from Lemma \ref{generator_lemma} together with an argument along the lines of that in Section 2 of \cite{BEG00}.
Since $g_{\eta}(x)$ is symmetric with respect to $\eta$, from Lemma \ref{generator_lemma} we have
\begin{align*}
\gen g_\eta (x)=&
\frac{1}{2}\sum_{i:\eta_i>1}\eta_i(\eta_i-1-\alpha)\frac{\E_{\alpha,\theta}[{P}_{\eta-e_i}]}{\E_{\alpha,\theta}[{P}_{\eta}]}g_{\eta-e_i}(x)\\
 &+\frac{1}{2}(\theta + (d-1)\alpha)\sum_{i:\eta_i=1}\frac{\E_{\alpha,\theta}[{P}_{\eta-e_i}]}{\E_{\alpha,\theta}[{P}_{\eta}]}g_{\eta-e_i}
 -\frac{1}{2}n(n+\theta-1)g_{\eta}(x)\\
 =&\frac{1}{2}\sum_{i:\eta_i>1}\eta_i(\eta_i-1-\alpha)\frac{\theta_{(n)}(1-\alpha)_{(\eta_i-2)}}{\theta_{(n-1)}(1-\alpha)_{(\eta_i-1)}}g_{\eta-e_i}(x)\\
 &+\frac{1}{2}(\theta + (d-1)\alpha)\sum_{i:\eta_i=1}\frac{\theta_{(n)}}{\theta_{(n-1)}[\theta+(d-1)\alpha]}g_{\eta-e_i}
 -\frac{1}{2}n(n+\theta-1)g_{\eta}(x)\\
 =&
\lambda_{|\eta|}\sum_{i:\eta_i>1}\frac{\eta_i}{n}g_{\eta-e_i}(x)
+\lambda_{|\eta|}\sum_{i:\eta_i=1}\frac{1}{n}g_{\eta-e_i}
-\lambda_{|\eta|}g_{\eta}(x),
\end{align*}
If we now let 
 \begin{equation*}\label{p down arrow}
p^{\downarrow}(\eta,\omega)=\frac{\binom{|\omega|}{\omega}}{\binom{|\eta|}{\eta}}\chi(\omega,\eta)=\frac{\eta_i\chi(\omega,\eta)}{n}, \quad 
\chi(\omega,\eta)={a}_{\eta_i}(\eta),\quad 
 i=1,\ldots,l(\eta),
\end{equation*} 
when $\omega$ is the descending arrangement of $\eta-e_i$, and zero otherwise, we can write
\begin{equation}\label{duality on g_eta}
\gen g_\eta (x)=\lambda_{|\eta|}\sum_{\omega\in\Gamma_{|\eta|-1}:\ \omega\subset\eta}[g_{\omega}(x)-g_{\eta}(x)]p^{\downarrow}(\eta,\omega).
\end{equation} 
If we now define $\dualgen$ to be the operator on the right hand side of \eqref{duality on g_eta} acting on $g_{\eta}(x)$ as a function of $\eta$, it is plain that $\dualgen$ defines a pure-death process $\D_t$ with rates as in the claim. Note that when $|\eta|=1$ the rate is null, as $g_{\eta}\propto 1$ and $\gen g_{\eta}=0$ by Lemma \ref{generator_lemma}. 
Now the fact that $\gen g_\eta (x)=\dualgen g_\eta (x)$ implies  \eqref{dual:200} follows from Corollary 4.4.13 in \cite{EK86} in light of the boundedness of $g_\eta (x)$ (cf.~also Proposition 1.2 in \cite{JK2014}).
\end{proof}


Here $\D_t$ is the process describing the group sizes in Kingman's coalescent with mutation at time $t$. The transition rates only depend on the  parameter $\theta$ as in the one-parameter model, while the dependence on the second parameter $\alpha$ is only through \eqref{duality function}.

The next proposition identifies the transition function of the dual. To this end, let 
\begin{equation}\label{BC process}
D_t:=|\D_t|
\end{equation} 
be a death process on $\Z_{+}$ that counts the number of groups in $\D_t$. This jumps from $n$ to $n-1$ at rate $\lambda_{n}$ as in \eqref{lambda_n}, and it is well known \cite{G1980, T1984} that its transition probabilities are
\begin{equation}\label{BC process trans probabilities}
d_{nl}^\theta(t) =
\sum_{k=l}^ne^{-\lambda_{k}t}
(-1)^{k-l}
\frac{(2k+\theta-1)(l+\theta)_{(k-1)}}{l!(k-l)!}
\frac{ n_{[k]} }{ (\theta+n)_{(k)} },\quad 1\le l\le n,
\end{equation}
where $a_{[k]}=a(a-1)\cdots(a-k+1)$ for $k\in\mathbb N$, $a_{[0]}=1$, and $d_{n0}^\theta(t)=1-\sum_{l\ge1}d_{nl}^\theta(t)$. Note here that $\lambda_{k}>0$ for $k\ge2$ even when $-1<\theta\le 0$, and so $d_{nl}^\theta(t)>0$ for $2\le l\le n$. 

In the limit when $n\rightarrow \infty$, the last factor on the right in \eqref{BC process trans probabilities} is replaced by 1, yielding 
\begin{equation}\label{BC process trans probabilities infty}
d_{l}^\theta(t) =
\sum_{k\ge l}e^{-\lambda_{k}t}
(-1)^{k-l}
\frac{(2k+\theta-1)(l+\theta)_{(k-1)}}{l!(k-l)!},\quad l\ge1.
\end{equation}
See, e.g., \cite{G2006}. 
This process first appeared in \cite{G1980}, and counts the number of non-mutant edges back in time from the leaves towards the root in a Kingman's coalescent tree, when the mutation rate is $\theta/2\ge0$ along edges of the tree. It is also commonly known as \emph{block-counting process} of Kingman's coalescent with mutation. Moreover, it indexes a mixture expansion for the transition function of the Fleming--Viot process with parent independent mutation, a measure-valued (labelled) version of \eqref{gen:0} when $\alpha=0$. See \cite{EG1993}, Theorem 1.1. In our setting, we show it plays a key role in the transition function expansion of the two-parameter diffusion, a key argument of which is given by the next proposition.

 Define the hypergeometric probabilities
\begin{equation}\label{cal H}
{\cal H}(\omega |  \eta) = \sum_{1 \leq i_1\cdots \leq i_{\ell(\omega)}}\ 
\sum_{(\kappa_{(1)},\ldots, \kappa_{(\ell(\omega))}) = \omega}
\frac 
{
{\eta_{i_1}\choose {\kappa_1}} \cdots {\eta_{i_{\ell(\omega)}
} \choose {\kappa_{i_{\ell(\omega)}}
} }
}
{
{|\eta|\choose |\omega|}
},\quad \quad \omega \subset \eta.
\end{equation}
Here ${\cal H}(\omega |  \eta)$ is the probability of obtaining an unlabelled colour configuration $\omega$ when sampling $|\omega|$ balls without replacement uniformly from an urn containing a colour configuration $\eta$. 
The $|\omega|$ balls may be sampled all at once or one by one.
\begin{proposition}\label{prop: dual transitions}
Let $\D_{t}$ be as in  Theorem \ref{main prop}. Then its transition probabilities  are
\begin{equation}
q^\theta_{\eta\omega}(t):=\P(\D_{t}=\omega|  \D_{0}=\eta)
= {\cal H}(\omega |   \eta)
d^\theta_{|\eta| |\omega| }(t), \quad \quad \omega \subset \eta,
\quad \quad |\eta|>1,
\label{multiple:0}
\end{equation}
and zero otherwise, with
$d^\theta_{nm}(t)$ for $n\geq m\geq 1$ as in \eqref{BC process trans probabilities}. In particular, $\eta=(1)$ is an absorbing state. 
\end{proposition}
\begin{proof}
Let $n=|\eta|$. The process ${\cal D}_t$ in Theorem \ref{main prop} has overall rates $\lambda_{n}$ and the embedded chain has transition probabilities
\begin{equation}\label{embedded:00}
p^{\downarrow}(\eta,\eta^\prime)
= \frac {
{n-1 \choose \eta^\prime}\chi(\eta^\prime,\eta)
}
{
{n\choose \eta}
}
= a_{\eta_i}(\eta)\frac{\eta_i}{n},\quad \quad 
 \eta_i^\prime = \eta_i-1,\ \eta_j^\prime = \eta_j,\ j\ne i.
\end{equation}
It is easy to realize the transition probability (\ref{embedded:00}) by removing uniformly at random one ball after another, without replacement, from an urn.
Let $\eta$ be the initial color frequency of balls in the urn. If one ball is randomly deleted from from the urn, then the probability that the remaining balls have frequency $\eta^\prime$ is 
$a_{\eta_i}(\eta)\frac{\eta_i}{n} = p^{\downarrow}(\eta,\eta^\prime)$. After $n-m$ deletions a sample of size $m$ with colour configuration $\omega$ is obtained with probability
\begin{align*}
&\sum_{\omega=\omega_{n-m}\subset\cdots\subset\omega_1\subset\omega_0=\eta}
p^{\downarrow}(\omega_0,\omega_1)p^{\downarrow}(\omega_1,\omega_2)\cdots p^{\downarrow}(\omega_{n-m-1},\omega_{n-m})\\
&=\frac {
{n-m\choose \omega}
}
{
{n\choose \eta}
}
\sum_{\omega=\omega_{n-m}\subset\cdots\subset\omega_1\subset\omega_0=\eta}
\chi(\omega,\eta)\\
&= \frac {
{n-m\choose \omega}
}
{
{n\choose \eta}
}
\text{dim}(\omega,\eta),
\end{align*}
where
\begin{equation*} 
\text{dim}(\omega,\eta)
:= \sum_{\omega=\omega_{n-m}\subset\cdots\subset\omega_1\subset\omega_0=\lambda}
\chi(\omega,\eta).
\end{equation*}
After $n$ deletions there will be an empty color frequency $\omega=\emptyset$. 
Denote $\text{dim}(\eta)= \text{dim}(\emptyset,\eta)$ and note that  $\text{dim}(\eta)= {n\choose \eta}$ in agreement with 
$1 = {n\choose \eta}^{-1}{0\choose \emptyset}
\text{dim}(\emptyset,\eta)
$.
 Sampling $m$ balls without replacement is equivalent to the above deletion process, and the sampling probability is 
${\cal H}(\omega|  \eta)$ as in (\ref{cal H}). Thus 
\begin{equation}\nonumber
\begin{aligned}
q_{\eta\omega}^{\theta}(t)
=&\,\mathbb{P}_{\eta}(\D_t=\omega)=\mathbb{P}_{\eta}(\D_t=\omega|   D_t=|\omega|)\mathbb{P}_{\eta}(D_{t}=|\omega|)\\
=&\, \mathbb{P}_{\eta}(\D_t=\omega|   D_t=|\omega|)d_{|\eta||\omega|}^{\theta}(t)={\cal H}(\omega |   \eta)
d_{|\eta||\omega|}(t).
\end{aligned}
\end{equation} 
Finally, the fact that $\eta=(1)$ is absorbing follows immediately from Theorem \ref{main prop} through the fact that $\gen {P}_\eta=0$ when $\eta=(1)$ 
in Lemma \ref{generator_lemma}.
\end{proof}

Note that Proposition \ref{prop: dual transitions} accommodates the full range of values for $\theta>-\alpha$, where $0\le \alpha<1$. In fact, even when $-1<\theta<0$, the transition probabilities in \eqref{multiple:0} are positive for any $\eta$ such that $|\eta|>1$, through \eqref{rates in theorem} and specifically the fact that $\lambda_{n}>0$ for all $n>1$; cf.~\eqref{lambda_n}. The fact that formally $\lambda_{1}<0$ when $-1<\theta<0$ is immaterial, as the dual process is absorbed in $\eta=(1)$.

\section{Two-parameter conditional partition structures}\label{sec: conditional partitions}

In order to derive the transition density of the two-parameter diffusion through duality, we first need to explore in some detail the conditional distribution of a partition $\eta$, generated from the two-parameter Poisson--Dirichlet model (e.g., through \eqref{PU scheme}), conditional on having already observed a subset partition $\omega\subset \eta$. This can be done by appealing to the following generalized P\'olya urn scheme.

Let $ \w =|\omega|$, and suppose an urn contains a single white ball and $ \w $ non-white balls, whose colours are denoted by $Y^ \w =(Y_1,\cdots,Y_ \w )$, with $r\le \w$ different colours.
Define $\pi:Y^ \w \to \omega$ to be the function that maps  the sample $Y^ \w $ into the partition $\omega=(\omega_1,\cdots,\omega_r)\in\Gamma_{ \w }$, determined by the equivalence relation on colours, whereby $Y_i,Y_j$ are in the same group if $Y_i=Y_j$. For every group $\omega_{j}$, one ball is assigned mass $1-\alpha$ and the remaining $\omega_j-1$ balls are assigned mass $1$. The single white ball in the urn is assigned mass $\theta+\alpha r$. Balls are then drawn sequentially as follows. If the white ball is drawn, a ball of a new colour is added to the urn and assigned mass $1-\alpha$, and the mass of the white ball is increased by $\alpha$. If a coloured ball is drawn, it is replaced in the urn together with an additional ball of mass $1$ of the same colour. This urn model is an extension of \eqref{PU scheme} and \cite{H1984}. 

When the total number of balls in the urn is $n$, denote the colours of the $n- \w $ new balls by 
$X^{n- \w }=(X_1,\ldots,X_{n- \w })$. Let the colour configuration be denoted by $\pi(X^{n- \w })$ for the new sample and $\pi(X^{n- \w },Y^ \w )$ for the combined sample. We are interested in the distribution of these colour configurations conditional on $Y^ \w $. To this end, denote by $\text{Dir}(\beta_{1},\ldots,\beta_{r})$ a Dirichlet distribution with parameters $(\beta_{1},\ldots,\beta_{r})$ on the $(r-1)$-dimensional simplex, and let a direct sum notation $\oplus$ indicate an accumulation of the points of two point processes. Finally, denote by $\pdomega$ a $\pd$ distribution conditional on having observed the partition $\omega$, and by $\E_{\alpha,\theta|\omega}$ the corresponding expectation.

\begin{proposition}\label{Bayes:250}
Let $\omega \in \Gamma_ \w , \eta \in \Gamma_n$ and $ \gamma \in \Gamma_{n- \w }$. Then $\pdomega$ equals in distribution the descending arrangement of 
\begin{equation}\label{mixture:10}
Z_{ \w ,r}\mathrm{Dir}(\omega_1-\alpha,\ldots, \omega_r-\alpha)
 \oplus 
(1-Z_{ \w ,r})\mathrm{PD}_{\alpha,\theta+\alpha r},
\end{equation} 
where $Z_{ \w ,r}$ has Beta distribution $\mathrm{Beta}( \w -r\alpha,\theta+r\alpha)$,  independent of everything else. Furthermore, 
\begin{align}
\mathbb{P}(\pi(X^{n- \w })=\gamma |  \pi(Y^ \w )=\omega)=&\,\E_{\alpha,\theta|\omega}\big [\vec{P}_{\gamma}\big ],\label{deFinetti}\\
\mathbb{P}(\pi(X^{n- \w },Y^ \w )=\eta |  \pi(Y^ \w )=\omega)=&\,{\cal H}(\omega | \eta)\frac{\E_{\alpha,\theta}\big [\vec{P}_{\eta}\big ]}{\E_{\alpha,\theta}\big [\vec{P}_{\omega}\big ]},\notag
\end{align}
with $\cal H$ is in \eqref{cal H}.
\end{proposition}
\begin{proof}
Equations \eqref{mixture:10} and \eqref{deFinetti}  follow from the exchangeability of draws in the above described urn model, together with Corollary 20 in \cite{P1996}.
Consider now a path from $\omega$ to $\eta$. Let
$\omega^{i}=\pi(X^{i},Y^ \w ), 1\leq i\leq n- \w $, $ \w ^i=|\omega^i|$ and $r^i$ be the number of colours in $\omega^i$. Set $\omega^{0}=\omega$. When $\pi(X^{n- \w },Y^ \w )=\eta$, there is a path $\omega=\omega^0\subset\cdots\subset\omega^{n- \w }=\eta$. Let $\chi_B(\omega^i,\omega^{i+1})=a_{\omega^{i}_k}(\omega^{i})$ if $\omega^{i+1}$ is obtained from $\omega^{i}$ by adding $1$ to an existing $k$-th component or  $\chi_B(\omega^i,\omega^{i+1})=1$ if a new component is added to $\omega^{i+1}$. 
Then
\begin{equation*}
\mathbb{P}(\omega^{i+1} | \omega^i)=\chi_B(\omega^i,\omega^{i+1})\frac{\E_{\alpha,\theta}\big [{P}_{\omega^{i+1}}\big ]}{\E_{\alpha,\theta}\big [{P}_{\omega^i}\big ]}
= \begin{cases}
\displaystyle {\small a_{\omega_k^i}(\omega^{i})}\frac{\omega_k^i - \alpha}{\theta +  \w ^i},\quad &\text{if a coloured ball is drawn,}\\[3mm]
\displaystyle \frac{\theta+\alpha r^i}{\theta+ \w ^i}, &\text{if the white ball is drawn.}
\end{cases}
\label{identity:65}
\end{equation*}
Note that $\chi_B(\omega,\eta)$ and $\chi(\omega,\eta)$ are conjugate, i.e.
$$
\chi_B(\omega,\eta)=\frac{a_1(\omega)!\cdots a_{|\omega|}(\omega)!}{a_1(\eta)!\cdots a_{|\eta|}(\eta)!}\chi(\omega,\eta).
$$
Evaluating the probability of a path from $\omega$ to $\eta$, leads to
\begin{align*}
\mathbb{P}(\eta|\omega)
=&\,\sum_{\omega=\omega^0\subset\cdots\subset\omega^{n- \w }=\eta}\prod_{i=0}^{n- \w -1}\chi_{B}(\omega^{i},\omega^{i+1})\frac{\E_{\alpha,\theta}\big [{P}_{\omega^{i+1}}\big ]}{\E_{\alpha,\theta}\big [{P}_{\omega^{i}}\big ]}\\
=&\,\left(\sum_{\omega=\omega^0\subset\cdots\subset\omega^{n-w}=\eta}\prod_{i=0}^{n- \w -1}\chi_B(\omega^{i},\omega^{i+1})\right)\frac{\E_{\alpha,\theta}\big [{P}_{\eta}\big ]}{\E_{\alpha,\theta}\big [{P}_{\omega}\big ]}\\
=&\,\frac{a_1(\omega)!\cdots a_ \w (\omega)!\dim(\omega,\eta)}{a_1(\eta)!\cdots a_n(\eta)!}\frac{\E_{\alpha,\theta}\big [{P}_{\eta}\big ]}{\E_{\alpha,\theta}\big [{P}_{\omega}\big ]}\\
=&\,\frac{\binom{ \w }{\omega}\dim(\omega,\eta)}{\binom{n}{\eta}}\frac{\binom{n}{\eta}\frac{1}{a_1(\eta)!\cdots a_n(\eta)!}\E_{\alpha,\theta}\big [{P}_{\eta}\big ]}{\binom{ \w }{\omega}\frac{1}{a_1(\omega)!\cdots a_ \w (\omega)!}\E_{\alpha,\theta}\big [{P}_{\omega}\big ]}
={\cal H}(\omega | \eta)\frac{\E_{\alpha,\theta}\big [\vec{P}_{\eta}\big ]}{\E_{\alpha,\theta}\big [\vec{P}_{\omega}\big ]}
\end{align*}
which gives the second claim.
\end{proof}

Note now that when $\gamma$ gives the counts in the new sample and $\eta$ the counts in the combined sample, we have
$
\mathbb{P}(\pi(X^{n- \w })=\gamma |  Y^ \w )=\mathbb{P}(\pi(X^{n- \w },Y^ \w )=\eta |  Y^ \w )
$
and therefore
\begin{equation}\label{conditional partition structure}
M_{n,\omega}(\eta)
:=\E_{\alpha,\theta|\omega}\big [\vec{P}_{\gamma}\big ]
={\cal H}(\omega |  \eta)\frac{\E_{\alpha,\theta}[\vec{P}_{\eta}]}{\E_{\alpha,\theta}[\vec{P}_{\omega}]}.
\end{equation} 
Hence $M_{n,\omega}(\eta)$ is also a partition structure with representing measure $\pdomega$. We conclude the section with a representation of $\pdomega$ alternative to \eqref{mixture:10}.

\begin{lemma}\label{weak_Con}
$\pdomega$ in Proposition \ref{Bayes:250} can be written
\begin{equation}\nonumber
\pdomega(dy)=\frac{\vec{P}_{\omega}(y)}{\E_{\alpha,\theta}\big [\vec{P}_{\omega}\big ]}\pd(dy).
\end{equation}
\end{lemma}
\begin{proof} 
Let $V$ have distribution $\pd$ and $Y^ \w $ be a sample from the urn.
By Lemma \ref{weak_convergence}, 
$
\mathbb{P}(\pi(Y^ \w )=\omega,V\in dy)=\vec{P}_{\omega}(y)\pd(dy).
$
 Since the marginal distribution are $\mathbb{P}(\pi(Y^ \w )=\omega)=\E_{\alpha,\theta}\big [\vec{P}_{\omega}\big ]$, by Bayes' theorem
\begin{equation}\nonumber
\mathbb{P}(V\in dy | \pi(Y^ \w )=\omega)
=\frac{\vec{P}_{\omega}(y)}{\E_{\alpha,\theta}\big [\vec{P}_{\omega}\big ]}\pd(dy).
\end{equation} 
From (\ref{deFinetti}) in Proposition \ref{Bayes:250}, $\mathbb{P}(V\in dy | \pi(Y^ \w )=\omega)=\pdomega(dy)$, leading to the result.
\end{proof}

\section{Derivation of the transition density}\label{sec: transition}

In this section we apply the results of the previous two sections, namely the identified dual process together with conditional two-parameter partition structures, to derive the transition density of the two-parameter  diffusion.
In preparation to this task, recall that the two-parameter diffusion is reversible with stationary distribution $\pd$ \cite{P2009}. The transition probability $P(t,x,dy)$ is absolutely continuous with respect to $\pd$, and its transition density $p(t,x,y)$ was shown in \cite{FSWX2011} to be
\begin{equation}
p(t,x,y) = 1 + \sum_{m=2}^\infty e^{-\lambda_{m}t}q_m(x,y),
\label{RN:00}
\end{equation}
where
\begin{equation}\nonumber
q_m(x,y) = \frac{2m-1+\theta}{m!}\sum_{n=0}^m(-1)^{m-n}{m\choose n}(n+\theta)_{(m-1)}p_n(x,y), \quad \quad m=2,3,\ldots,
\end{equation}
with $p_0(x,y) = 1$ and
\begin{equation*}\label{pnkernel}
p_n(x,y) 
= \sum_{|\eta|=n}
\frac{\vec{P}_\eta(x)\vec{P}_\eta(y)}
{\E_{\alpha,\theta}\big [\vec{P}_\eta\big ]},\ \ \ \ n\geq 1.
\end{equation*} 
The proof of this fact in \cite{FSWX2011} leverages on a spectral representation by expanding on a proof by \cite{E1992} for the the one-parameter case, i.e., the infinitely-many-neutral-alleles model. Previously, \cite{G1979} had obtained (\ref{RN:00}) in the one-parameter case as a limit from a model with finitely-many types (cf.~also Proposition 4.3 in \cite{GS2012}), and \cite{EG1993} the corresponding version for the labelled model. Analogous transition structures also appear in a family of diffusions defined through the Jack graph, see \cite{Z2023} and references therein. 

Here it is useful to emphasize that there are two possible expansions for the transition density of the model at hand: a spectral expansion in terms of reproducing kernel orthogonal polynomials on $\pd$, and  an expansion as mixture of two-parameter Poisson--Dirichlet distributions. 
The equivalence of the two forms was explained in \cite{GS2010,GS2013} for the one parameter case and in \cite{Z2015} (cf.~Theorem 2.1) for the two-parameter case. In particular, \cite{Z2015} showed that (\ref{RN:00}) is the same as
\begin{equation}
p(t,x,y) ={\tilde d_1^{\theta}}(t)+\sum_{n=2}^\infty d_{n}^{\theta}(t)p_n(x,y),\quad \theta>-1\label{density_lod:01},
\end{equation}
where 
\begin{equation}\label{tilde d1}
\tilde d_{1}^{\theta}(t):=1-\sum_{n=2}^{\infty}d_{n}^{\theta}(t).
\end{equation} 
Note that algebraically 
$\tilde d_{1}^{\theta}(t)=d_0^{\theta}(t)+d_1^{\theta}(t)$
with both $d_0^{\theta}(t),d_1^{\theta}(t)$ non-negative when $\theta \geq 0$, but for $-1 < \theta < 0$ these may not be individually non-negative.

Here we concentrate on the expansion \eqref{density_lod:01} and show how this can be derived using the dual process of Theorem \ref{main prop}. 
To this end, we need the following corollary of the same theorem.

\begin{corollary}\label{Corr24}
For $\eta \in \Gamma_n$, we have
\begin{equation}
\E_x\big[{P}_\eta\big(X(t)\big)\big] =\E_{\alpha,\theta}\big [{P}_\eta\big ]\Bigg (
\tilde d_{n1}^{\theta}(t) +
\sum_{ \w =2}^nd^\theta_{n \w }(t)
\sum_{|\omega|= \w ,\omega\subset\eta}{\cal H}(\omega | \eta)
\frac{{P}_\omega(x)}{\E_{\alpha,\theta}\big [{P}_\omega\big ]}\Bigg ),
\label{expansion:0}
\end{equation}
with $\tilde d_{n1}^{\theta}(t)$ defined as in \eqref{tilde d1} in relation to \eqref{BC process trans probabilities}.
\end{corollary}
\begin{proof}
The proof is immediate by expanding the right-hand side of (\ref{dual:200}) and using (\ref{multiple:0}).
\end{proof}

Note that in light of \eqref{vec P}, we can replace ${P}_{\eta}$ and ${P}_{\omega}$ with $\vec{P}_{\eta}$ and $\vec{P}_{\omega}$ in \eqref{expansion:0}. In this equation there are three  sampling probabilities, namely
\begin{align*}
M_n(\eta)=\E_{\alpha,\theta}[\vec{P}_{\eta}],\quad 
M_{n,t}(\eta)=\E_x[\vec{P}_{\eta}(X(t))],\quad 
M_{n,\omega}(\eta)={\cal H}(\omega |  \eta)\frac{\E_{\alpha,\theta}[\vec{P}_{\eta}]}{\E_{\alpha,\theta}[\vec{P}_{\omega}]}.
\label{3 partition structures}
\end{align*} 
Here $M_n(\eta)$ is as in \eqref{PSF} and it is interpreted as the law of the partition $\eta$ taken at stationarity; $M_{n,t}(\eta)$ is the law of $\eta$ relative to the conditional distribution of $X(t)$, given the initial state $X(0)=x$; finally $M_{n,\omega}(\eta)$ is the law of $\eta$ conditional of having observed $\omega \subset \eta$, which was the object of Section \ref{sec: conditional partitions} (cf.~\eqref{conditional partition structure}).
Since all three are partition structures, we can define the respective associated measures
\begin{equation}\label{3 empiricals}
\begin{aligned}
\mu_{n}(dy)=&\,\sum_{|\eta|=n}M_{n}(\eta)\delta_{\frac{\eta}{n}}(dy),\\
\mu_{n,t,x}(dy)=&\,\sum_{|\eta|=n}\E_{x}[\vec{P}_{\eta}(X_t)]\delta_{\frac{\eta}{n}}(dy),\\
\nu_{\omega,n}(dy)=&\,\sum_{\omega\subset\eta}M_{n,\omega}(\eta)\delta_{\eta/n}(dy).
\end{aligned}
\end{equation} 
Lemma \ref{weak_convergence} deals with the convergence of $\mu_n$, while the following, which in fact is an application of the former, with the convergence of $\nu_{n,\omega}$.

\begin{lemma}\label{lemma conditional convergence}
Let $\nu_{n,\omega}$ be as in \eqref{3 empiricals}. Then $\nu_{n,\omega}$ converges weakly to $\pdomega$.
\end{lemma}
\begin{proof}
Recall from Section \ref{sec: conditional partitions} that $\pi(X^{n- \w },Y^ \w )$ denotes the partition of $n$ induced by the joint sample $(X^{n- \w },Y^ \w )$. 
By Kingman's representation theorem (cf.~Section \ref{sec: preliminiaries}), $\pi(X^{n- \w },Y^ \w )\big /n$ converges almost surely to a random variable $Z$ with values in $\simplex$. Because the conditional distribution of $\pi(X^{n- \w },Y^ \w )\big /n$ is 
$$
\nu_{n,\omega}(dy)=\sum_{|\eta|=n,\ \omega\subset \eta} {\cal H}(\omega | \eta)
\frac{\E_{\alpha,\theta}\big [\vec{P}_{\eta}\big ]}{\E_{\alpha,\theta}\big [\vec{P}_{\omega}\big ]}
\delta_{\eta/n}(dy),
$$
we know that $\nu_{n,\omega}(dy)$ converges weakly to the conditional distribution $\mathbb{P}(Z\in dy |  Y^ \w )$. Conditioning on $Y^ \w $, we know $\pi(X^{n- \w })\big / n$ and $\pi(X^{n- \w },Y^ \w )\big /n$ will both converge to $Z$ almost surely. Then $Z$ has distribution $\pdomega $ due to Lemma \ref{weak_convergence} and equation (\ref{deFinetti}).
\end{proof}

The following Proposition uses the above results to obtain the transition function of the diffusion.

\begin{proposition}\label{theorem33}
Given $X(0)=x$, $X(t)$ has distribution
\begin{equation}
P(t,x,dy)=\tilde d_{1}^\theta(t)\pd(dy) +\sum_{ \w =2}^\infty d^\theta_{ \w }(t)\sum_{\omega:|\omega| = \w }
\pdomega (dy)\vec{P}_\omega(x),
\end{equation}
with $\tilde d_{1}^{\theta}(t)$ as in \eqref{tilde d1}.
\end{proposition}
\begin{proof}
From Corollary \ref{Corr24}  we have
\begin{align*}
\mu_{n,t,x}(dy)=&\tilde{d}_{n,1}^{\theta}(t)\mu_{n}(dy)+\sum_{ \w =2}^{n}d^{\theta}_{n \w }(t)\sum_{|\omega|= \w }\vec{P}_{\omega}(x)\nu_{\omega,n}(dy).\label{expansion:000}
\end{align*}
Then for any bounded continuous function $f$ on $\simplex$, we have 
\begin{align*}
\int_{\simplex}f(y)\mu_{n,t,x}(dy)
=\tilde{d}_{n,1}^{\theta}(t)\int_{\simplex}f(y)\mu_{n}(dy)+\sum_{ \w =2}^{n}d^{\theta}_{n \w }(t)\sum_{|\omega|= \w }\vec{P}_{\omega}(x)\int_{\simplex}f(y)\nu_{\omega,n}(dy).
\end{align*}
We also know that $\mu_{n}$ converges weakly to $ \pd$ from Lemma  \ref{weak_convergence} and $\nu_{\omega,n}$ converges weakly to $  \pdomega$ from Lemma \ref{lemma conditional convergence}. Then the bounded convergence theorem implies
\begin{align*}
\lim_{n\to\infty}\int_{\simplex}f(y)\mu_{n,t,x}(dy)
=&\,\tilde{d}_{1}^{\theta}(t)\int_{\simplex}f(y)\pd(dy)\\
&\,+\sum_{ \w =2}^{\infty}d^{\theta}_{ \w }(t)\sum_{|\omega|= \w }\vec{P}_{\omega}(x)\int_{\simplex}f(y)\pdomega(dy).
\end{align*}
Since Lemma  \ref{weak_convergence} also implies that $\mu_{n,t,x}(dy)$ converges weakly to $P(t,x,dy)$, we can conclude that
$$
P(t,x,dy)=\tilde{d}_{1}^{\theta}(t)\pd(dy)+\sum_{ \w =2}^{\infty}d^{\theta}_{ \w }(t)\sum_{|\omega|= \w }\vec{P}_{\omega}(x)\pdomega(dy)
$$
giving the result.
\end{proof}

The transition function obtained in Proposition \ref{theorem33} can be interpreted in terms of P\'olya urn schemes. More specifically, we can exploit the P\'olya urn representation discussed in Section \ref{sec: conditional partitions} to construct, for each $t\geq 0$, a random variable $X(t)$ with distribution $P(t,x,dy)$ as in Proposition \ref{theorem33}. In short, this is obtained by branching the urn scheme after the observation of a sequence that generates a configuration $\omega$, and letting the two resulting split urns evolve independently onwards. This approach is inspired by a similar construction in \cite{GS2012} for the one-parameter case (see also \cite{GJS2018} for a connection with Wright--Fisher diffusion bridges).

Let $\{U_n,n\geq 1\}$ be the process driven by the P\'olya urn scheme described in Section \ref{sec: conditional partitions}, where the urn starts with a single white ball and $U_{n}$ is the configuration of balls after $n$ draws. The urn scheme is run for a random number of draws $D_t$ to obtain a configuration $U_{D_t}$,  where 
$
\mathbb{P}(D_t= \w )=d_{ \w }^{\theta}(t), ~ \w \geq1,
$
and $D_{t}$ is as in \eqref{BC process} and $d_{ \w }^{\theta}(t)$ as in \eqref{BC process trans probabilities infty}.
The urn is then split and two urns are run independently for $n-D_t$ additional draws, for $n>D_{t}$, both beginning at $U_{D_t}$. This produces two random partitions $\eta$ and $\tilde{\eta}$, induced by the two resulting urn configurations. Conditional on the partition $\omega$ obtained at step $D_t$, these partitions are independent and with common distribution \eqref{conditional partition structure}, namely
\begin{equation}\label{eta given omega}
\mathbb{P}(U_n=\eta |  U_{D_t}=\omega)={\cal H}(\omega | \eta)\frac{\E_{\alpha,\theta}\big [\vec{P}_{\eta}\big ]}{\E_{\alpha,\theta}\big [\vec{P}_{\omega}\big ]}.
\end{equation} 
The two urns, denoted $\{(U_n,U^{\prime}_n),n\geq1\}$, are thus coupled, with joint distribution 
$$
\mathbb{P}(U_n=\eta,U^{\prime}_n=\eta^\prime)=\sum_{ \w =1}^{\infty}d_{ \w }^{\theta}(t)\sum_{|\omega|= \w }\E_{\alpha,\theta}\big [\vec{P}_{\omega}\big ]\mathbb{P}(\eta | \omega)\mathbb{P}(\eta^{\prime} | \omega),
$$
where $\mathbb{P}(\eta | \omega)$ is a shorthand notation for \eqref{eta given omega} and we set $\mathbb{P}(\eta | \omega)=0$ if $|\omega| > |\eta|$, and similarly for $\eta^\prime$.
Consider now the measure on $\simplex\times\simplex$
$$
\nu_n(dx,dy)=\sum_{|\eta|=n}\sum_{|\eta^{\prime}|=n}\mathbb{P}(U_n=\eta,U^{\prime}_n=\eta^{\prime})\delta_{\frac{\eta}{n}}(dx)\delta_{\frac{\eta^{\prime}}{n}}(dy).
$$
By another application of Lemma \ref{weak_convergence}, $\nu_{n}(dx,dy)$ converges weakly to
$$
\sum_{ \w =1}^{\infty}d_{ \w }^{\theta}(t)p_ \w (x,y)\pd(dx)\pd(dy).
$$
This probabilistic construction shows that the conditional distribution of $X(t)$ given $  X(0)$, where $X$ is the two-parameter diffusion, is the same as (\ref{density_lod:01}), at a fixed time $t$.

\appendix
\section{Appendix}\label{sec:proof}


\subsection{Proof of Lemma \ref{generator_lemma}}

In this proof $\eta$ and the modifications made to it are not ranked, however it is assumed that singletons are arranged to be at the right end of $\eta$. Let $d=l(\eta)$. The proof that (\ref{recursion:0}) holds is by induction on $l(\eta)$. If $a_1(\eta) = 0$ then $P_\eta \in {\cal C}$ and 
\begin{equation*}
\gen P_\eta = -\frac{1}{2}n(n+\theta-1)P_\eta
 + \frac{1}{2}\sum_{i:\eta_i>1}\eta_i(\eta_i-1-\alpha)P_{\eta-e_i}
\end{equation*}
by simply applying $\gen $. If $a_1(\eta)>0$
 $P_\eta$ can be recursively expressed as linear combinations of functions in ${\cal C}$ which can then be acted on by the differential operator (\ref{gen:0}). Recall that $P_\eta$ is exchangeable in the elements of $\eta$ so it is possible to rearrange the elements so that the $a_1(\eta)$ singletons are the last entries of $\eta$. Suppose that $\eta_d=1$. Then we use the notation that $\eta^-= \eta-e_d = (\eta_1,\ldots,\eta_{d-1})$, 
where the final component of $\eta$ is removed.
 Consider (\ref{P_recursion:00a}) with $i=d$, then
\begin{equation}
P_\eta = P_{\eta^-} - \sum_{j=1}^{d-1}P_{\eta^-+e_j}.
\label{P_recursion:0}
\end{equation}
Suppose that (\ref{recursion:0}) holds for $l(\eta) \leq d-1$. First use the induction hypothesis on the second term on the right of (\ref{P_recursion:0}).
Denote $d^\circ = d - a_1(\eta)$.
For $1\leq j\leq d^\circ$,
\begin{align*}
\gen P_{\eta^-+e_j} 
=&\,-\frac{1}{2}n(n+\theta-1)P_{\eta^-+e_j}\\
&\, + \frac{1}{2}\sum_{i=1}^{d^\circ}(\eta_i+\delta_{ij})(\eta_i+\delta_{ij}-1-\alpha)P_{\eta^--e_i+e_j}
 +\frac{1}{2}(\theta + (d-2)\alpha)(a_1(\eta)-1)P_{\eta^{--}+e_j}\\
=&\,-\frac{1}{2}n(n+\theta-1)P_{\eta^-+e_j}
 + \frac{1}{2}\sum_{i=1}^{d^\circ}\eta_i(\eta_i-1-\alpha)P_{\eta^--e_i+e_j} + \frac{1}{2}(2\eta_j-\alpha)P_{\eta^-}\\
&\,+\frac{1}{2}(\theta + (d-2)\alpha)(a_1(\eta)-1)P_{\eta^{--}+e_j}
\end{align*}
If $a_1(\eta) = 1$ the last term in the equation above with a factor $(a_1(\eta)-1)$ is taken to be zero, and similarly in equations that follow.
Denote $\eta^* = (\eta_1,\ldots,\eta_{d^\circ},2,1,\ldots,1)$ with $a_1(\eta^*)=a_1(\eta)-2$.
For $d^\circ < j \leq d-1$, $P_{\eta^-+e_j}=P_{\eta^*}$ and
\begin{align*}
\gen P_{\eta^-+e_j}
=&\,\gen P_{\eta^*} \\
=&\,-\frac{1}{2}n(n+\theta-1)P_{\eta^*}
 + \frac{1}{2}\sum_{i=1}^{d^\circ}\eta_i(\eta_i-1-\alpha)P_{\eta^*-e_i}
+ \frac{1}{2}2(1-\alpha)P_{\eta^-} \\
&\,
 +\frac{1}{2}(\theta + (d-2)\alpha)(a_1(\eta)-2)P_{\eta^{*-}},
\end{align*}
A term on the right comes from $\eta_{i}=2$, when $i=d^\circ+1$. Then 
$\eta_i(\eta_i-1-\alpha)P_{\eta^-}= 2(1-\alpha)P_{\eta^-}$.
Summing, recalling that $\eta^*=\eta^-+e_j$, and using (\ref{P_recursion:0}) for identities, where $1 \leq i \leq d^\circ$,
\[
\sum_{j=1}^{d-1}P_{\eta^-+e_j} = P_{\eta^-} - P_\eta,\>
\sum_{j=1}^{d-1}P_{\eta^--e_i+e_j} = P_{\eta^--e_i} - P_{\eta-e_i},\>
\sum_{j=1}^{d-2}P_{\eta^{--}+e_j} = P_{\eta^{--}} - P_{\eta^-},
\]
gives
\begin{equation}
\begin{aligned}
\sum_{j=1}^{d-1}\gen P_{\eta^-+e_j} 
=&\,
-\frac{1}{2}n(n+\theta-1)(P_{\eta^-}-\hat P_{\eta})\\
&\,+ \frac{1}{2}\sum_{i=1}^{d^\circ}\eta_i(\eta_i-1-\alpha)(P_{\eta^--e_i}-P_{\eta-e_i})\\
 &\,+ \frac{1}{2}(2(n-a_1(\eta)) - (d-a_1(\eta)))\alpha P_{\eta^-}
 + \frac{1}{2}2(1-\alpha)(a_1(\eta)-1)P_{\eta^-} \\
  &\,+\frac{1}{2}(a_1(\eta)-1)(\theta + (d-2)\alpha)(P_{\eta^{--}} - P_{\eta^-}),
\end{aligned}
\label{proof:011}
\end{equation}
Now use the induction hypothesis on the first term on the right of (\ref{P_recursion:0}).
\begin{align*}
\gen P_{\eta-}
=&\,-\frac{1}{2}(n-1)(n+\theta-2)P_{\eta^-}\\
 &+ \frac{1}{2}\sum_{i=1}^{d^\circ}\eta_i(\eta_i-1-\alpha)P_{\eta^--e_i} +\frac{1}{2}(\theta + (d-2)\alpha)(a_1(\eta)-1)P_{\eta^{--}}\\
 &=-\frac{1}{2}n(n+\theta-1)P_{\eta^-}+ \frac{1}{2}(2n-2+\theta)P_{\eta^-}\\
 &+ \frac{1}{2}\sum_{i=1}^{d^\circ}\eta_i(\eta_i-1-\alpha)P_{\eta^--e_i} +\frac{1}{2}(\theta + (d-2)\alpha)(a_1(\eta)-1)P_{\eta^{--}}
 \end{align*}
 Subtracting (\ref{proof:011}) from the previous yields
\begin{equation*}
\begin{aligned}
\gen \hat P_{\eta} 
=&\,\gen P_{\eta-} - \sum_{j=1}^{d-1}\gen P_{\eta^-+e_j}\\
&= -\frac{1}{2}n(n+\theta-1)\hat P_{\eta} 
 + \frac{1}{2}\sum_{i=1}^{d^\circ}\eta_i(\eta_i-1-\alpha)P_{\eta-e_i} + R(\eta),
\end{aligned}
\end{equation*}
where 
\begin{equation}
\begin{aligned}
R(\eta) =&\, \frac{1}{2}(2(n-1) +\theta)P_{\eta^-}
 -\frac{1}{2}(2(n-a_1(\eta)) - (d-a_1(\eta)))\alpha P_{\eta^-}\\
& - \frac{1}{2}2(1-\alpha)(a_1(\eta)-1)P_{\eta^-} 
  +\frac{1}{2}(a_1(\eta)-1)(\theta + (d-2)\alpha)P_{\eta^-}
\end{aligned}
\label{rcalc:0}
\end{equation}
The coefficient of $\frac{1}{2}a_1(\eta)P_{\eta^-}$ in (\ref{rcalc:0}) is
\[
2-\alpha - 2(1-\alpha) + \theta + (d-2)\alpha = \theta+(d-1)\alpha
\]
and the terms not involving $a_1(\eta)$ are one-half times
\[
2(n-1) + \theta -2n + d\alpha + 2(1-\alpha) - (\theta+(d-2)\alpha) = 0.
\]
Therefore, correctly,  $R(\eta) = \frac{1}{2}( \theta+(d-1)\alpha)P_{\eta^-}$ and the induction is completed.
%



\section*{Ackowledgements}

MR acknowledges support of MUR - Prin 2022 - Grant no.~2022CLTYP4, funded by the European Union - Next Generation EU. YZ was supported by grants NSFC117015170 and RDF-22-01-013.

\end{document}